 \DeclareMathOperator{\Res}{Res}
\def\inv{^{-1}}
\DeclareMathOperator{\del}{\partial}
\DeclareMathOperator{\Ext}{Ext}
\DeclareMathOperator{\dlog}{dlog}
\def\refp #1.{(\ref{#1})}
\newcommand\carets [1]{\langle #1 \rangle}
\newcommand{\ul}[1]{\underline {#1}}
\def\sbr #1.{^{[#1]}}
\def\sfl #1.{^{\lfloor #1\rfloor}}
\def\hh{\mathbb H}
\def\inv{^{-1}}
\def\?{{\bf{??}}}
\def\H{\mathcal H}
\def\C{\mathbb C}
\def\P{\mathbb P}
\def\N{\mathbb N}
\def\Z{\mathbb Z}
\def\O{\mathcal O}
\def\id{\text{id}}
\def\g{\mathfrak g}
\def\1/2{\frac{1}{2}}
\def\I{\mathcal{ I}}
\def\2{{[2]}}
\def\l{\ell}
\def\nl{\newline}
\def\<{\langle}
\def\>{\rangle}
\def\2{{[2]}}
\def\l{\ell}
\def\scl #1.{^{\lceil#1\rceil}}
\def\spr #1.{^{(#1)}}
\def\sbc #1.{^{\{#1\}}}
\def\subpr#1.{_{(#1)}}
\def\beq{\begin{equation*}}
\def\eeq{\end{equation*}}
\newcommand{\llog}[1]{\langle\log {#1} \rangle}
\newcommand{\llogp}[1]{\langle\log  {^+#1} \rangle}
\newcommand{\mlog}[1]{\langle-\log {#1} \rangle}
\def\g3{{\Gamma\spr 3.}}
\newcommand{\eqspl}[2]{
\begin{equation}\label{#1}
\begin{split}
#2\end{split}\end{equation}}
\newcommand{\exseq}[3]{
0\to #1\to #2\to #3\to 0
}
\newcommand{\beginalphaenum}{
\begin{enumerate}\renewcommand{\labelenumi}{ }
\item \begin{enumerate}
}
\def\eex{\end{rm}\end{example}}
\def\rmRes{\mathrm{Res}}
\newtheorem{thm}{Theorem} 
\newtheorem*{thm*}{Theorem}
\newtheorem*{prop*}{Proposition}
\newtheorem{cor}[thm]{Corollary}
\newtheorem*{cor*}{Corollary}
\newtheorem{lem}[thm]{Lemma}
\newtheorem*{claim*}{Claim}
\newtheorem{prop}[thm]{Proposition}
\theoremstyle{remark}
\newtheorem{rem}[thm]{Remark}
\newtheorem*{rem*}{Remark}
\newtheorem{crit-rem}[thm]{Critical remark}
\newtheorem{example}[thm]{Example}
\newtheorem*{example*}{Example}
\newtheorem*{lem*}{Lemma}
\newtheorem*{defn*}{Definition}
\begin{document} 
	\title{Complexes, Residues and obstructions\\  for
		log-symplectic manifolds }
	\author 
	{Ziv Ran}
	
	
	\date {\DTMnow}
	
	
	\address {\nl UC Math Dept. \nl
		Big Springs Road Surge Facility
		\nl
		Riverside CA 92521 US\nl 
		ziv.ran @  ucr.edu\nl
		\url{http://math.ucr.edu/~ziv/}
	}
	
	\subjclass[2010]{14J40, 32G07, 32J27, 53D17}
	\keywords{Poisson structure, log-symplectic manifold, deformation
		theory, log complex, mixed Hodge theory}
	
	\begin{abstract}
		We consider compact K\"ahlerian manifolds $X$ 
		of even dimension 4 or more, endowed with
		a log-symplectic   structure 
		$\Phi$, a generically nondegenerate closed 2-form
		 with simple poles on a divisor $D$ with local normal
		crossings. A simple linear inequality involving the iterated 
		Poincar\'e  residues of $\Phi$
		at components of the double locus of $D$ 
		ensures that
		 the pair $(X, \Phi)$
		has unobstructed deformations and that $D$ deforms locally trivially. 
	
	\end{abstract}
	\maketitle
	\section*{Data availaibility statement}
	There is no data set associated with this paper.
	\section*{Introduction}
	A  log-symplectic manifold is a  pair consisting of 
	a complex manifold $X$, usually compact and K\"ahlerian,
	 together with a log-symplectic structure.
	A log-symplectic structure can be defined
	either as a generically nongegenerate meromorphic closed  2-form $\Phi$ with normal-crossing 
	(anticanonical) polar
	divisor $D$, or equivalently as a generically nondegenerate holomorphic tangential 2-vector $\Pi$
	such that $[\Pi,\Pi]=0$ with
	normal-crossing degeneracy divisor $D$. The two structures are related via $\Pi=\Phi\inv$.  
	See \cite{dufour-zung} or \cite{polishchuk-ag-poisson}
	or \cite{ciccoli}  or \cite{pym-constructions} for basic facts on 
	Poisson and log-symplectic manifolds 
	and \cite{ginzburg-kaledin} (especially the appendix),
	\cite{goto-rozanski-witten}, \cite{marcut-ot}, \cite{lima-pereira}
	or \cite{namikawa}, and references therein, for deformations.\par
	Understanding log-symplectic manifolds unavoidably involves understanding their deformations.
	In the very special case of \emph{symplectic} manifolds, where $D=0$, the classical theorem
	of Bogomolov \cite{bogomolov} shows that the pair $(X, \Phi)$ has unobstructed deformations.
	In \cite{logplus} we obtained a generalization of this result
	which holds when $\Phi$ satisfied a certain 'very general position' condition
	with respect to $D$ (the original statement is corrected in the subsequent
	erratum/corrigendum).
	Namely, we showed in this case that $(X, \Phi)$ has 'strongly unobstructed' deformations,
	in the sense that it has unobstructed deformations and $D$ deforms locally trivially.
	
	 Further results on unobstructed deformations
	(in the sense of Hitchin's generalized geometry \cite{hitchin-generalizedcy})
	and Torelli theorems in the case where $D$ has global normal crossings
	 were obtained by Matviichuk, Pym and Schedler \cite{pym-loctorelli},
	based on their notion of
	holonomicity.\par
	Our purpose here is to prove a more precise strong unobstructedness result compared to
	\cite{logplus}, nailing down the generality required: we will show in Theorem \ref{iso} that 
	strong unobstructedness can fail only when the log-symplectic
	structure $\Phi$, more precisely its (iterated Poincar\'e) residues at codimension-2 strata of the polar
	divisor $D$ (which are essentially the (locally constant) coefficients of $\Phi$ with respect to
	a suitable basis of the log forms adapted to $D$)
	 satisfy certain special linear relations with integer coefficients.
	 Explicitly, at a triple point of $D$ with branches labelled 1,2,3 and associated
	 residues $c_{12}, c_{23}, c_{31}$, the condition is
	 \[c_{23}+c_{31}\in\N c_{12}.\]
	 Essentially, if this never happens over the entire triple locus
	  then $(X, \Phi)$ has strongly unobstructed deformations.

\par
	The strategy of the proof as in \cite{logplus} is to study the inclusion of complexes
	\[(T^\bullet_X\mlog{D}, [\ .\ ,\Pi])\to (T^\bullet_X, [\ .\ ,\Pi]) \ ,\]
	albeit from a more global viewpoint.
	In fact as in \cite{logplus} 
	it turns out to be more convenient to transport the situation over to the De Rham
	side where it becomes an inclusion
	\[(\Omega^\bullet_X\llog{D}, d)\to (\Omega^\bullet_X\llogp{D}, d)\]
	where the latter 'log-plus' complex is a certain complex of meromorphic forms with
	poles on $D$. We study a filtration, introduced in \cite{logplus}, 
	interpolating between the two complexes,
	especially its first two graded pieces. As we show, the first piece is automatically
	exact, while 0-acyclicity for the second piece leads to the above cocycle condition.
	 See  \S \ref{comparing} for details.\par
	We begin the paper with a couple of auxiliary, independent sections. In \S \ref{principal} 
	we construct a 'principal
	parts complex' associated to an invertible sheaf $L$ on a smooth variety, 
	extending the principal parts
	sheaf $P(L)$ together with the universal derivation $L\to P(L)$. 
	We show this complex is always exact. In
	\S\ref{calculus} we show that, for any normal-crossing divisor $D\subset X$
	on any smooth variety, the log complex
	$\Omega^\bullet_X\llog{D}$- unlike $\Omega_X^\bullet$ itself-
	 can be pulled back to a complex of vector bundles on the
	normalization of $D$. These complexes play a role in our analysis of the 
	aforementioned inclusion map.\par
	I am grateful to Brent Pym for helpful communications, in particular for communicating Example
	\ref{pym-example}.
	
\section{Principal parts complex}\label{principal}
In this section $X$ denotes an arbitrary $n$-dimensional
smooth complex variety and $L$ denotes an invertible sheaf on $X$.
\subsection{Principal parts}
 The Grothendieck principal
parts sheaf $P(L)$ (see EGA) is a rank-$(n+1)$ bundle on $X$ defined as
\[P(L)=p_{1*}(p_2^*L\otimes(\O_{X\times X}/\I_\Delta^2))\]
where $\Delta\subset X\times X$ is the diagonal and $p_1, p_2:X\times X\to X$
are the projections.
We have a short exact sequence
\[\exseq{\Omega^1_X\otimes L}{ P(L)}{ L}\]
whose corresponding extension class in $\Ext^1(L, \Omega^1_X\otimes L)=H^1(X, \Omega^1_X)$
coincides with $c_1(L)$. The sheaf 
\[P_0(L)=P(L)\otimes L\inv,\]
which likewise has extension class $c_1(L)$, is called
the \emph{normalized} principal parts sheaf.
The map $P(L)\to L$ admits a splitting $d_L:L\to P(L)$ that
is a derivation, i.e.
\[d_L(fu)=fd_Lu+df\otimes u.\]
In fact, $d_L$ the universal derivation on $L$. Moreover $P(L)$ is generated
over $\O_X$ by the image of $d_L$. Likewise, $P_0(L)$ is generated
by elements of the form $\dlog(u):=d_Lu\otimes u\inv$ where $u$ is a local generator of $L$.\par
\subsection{Complex}
It is well known that $P(L^{m+1})\simeq P(L)\otimes L^m, m\geq 0$ which in 
particular yields a derivation $L^{n+1}\to P(L)\otimes L^n, n\geq 0$.
In fact, This map extends to a complex
that we denote by $P_{n+1}^\bullet(L)$  or just
$P^\bullet(L)$ and call the ( $(n+1)$st) \emph{principal parts complex}
of $L$:
\eqspl{}{
P^\bullet(L): L^{n+1}\to P(L)L^{n}\to\wedge^2P(L)L^{n-1}\to...\wedge^{n+1}P(L)=	
\Omega^n_X\otimes L^{n+1}.
}
The differential is given, in terms of local $\O_X$-generators $u_1,...,u_k, v_1,...,v_\l$
of L, by
\[d(u_1...u_kd_L(v_1)\wedge...d_L(v_\l)
=\sum u_1...\hat{u_i}...u_kd_L(u_i)\wedge d_L(v_1)\wedge...\wedge d_L(v_\l)\]
and extending using additivity and the derivation property.
There are also similar shorter complexes
\[ L^m\to P(L)L^{m-1}\to...\to\wedge^mP(L).\]
Note the exact sequences
\[\exseq{\Omega^m_XL^m}{\wedge^mP(L)}{\Omega^{m-1}_XL^m}
.\]
	These sequences splits locally and also split globally
	 whenever $L$ is a flat line bundle. In such cases, we get a short
	 exact sequence
	\eqspl{}{
		\exseq{\Omega^\bullet_XL^{n+1}[-1]}{P^\bullet(L)}{\Omega^\bullet_XL^{n+1}}
	}
	The principal parts complex $P^\bullet(L)$ may be tensored with $L^{j-n-1}$, 
	for any $j> 0$, yielding
	the $j$-th principal parts complex:
\eqspl{}{
P^\bullet_j(L): L^j\to P_0(L)L^j\to \wedge^2 P_0(L)L^j\to...\to\wedge^{n+1}P_0(L)L^j	
} The differential is defined by setting
\[d(\dlog(u_1)\wedge...\wedge\dlog(u_i)v^j)=j\dlog(u_1)\wedge...\wedge\dlog(u_i)\dlog(v)v^j\]
where $u_1,...,u_i, v$ are local generators for $L$,
and extending by additivity and the derivation property. Thus, 
$P^\bullet(L)=P^\bullet_{n+1}(L)$.\par
An important property of principal parts complexes is the following:
\begin{prop}\label{exact}
	For any local system $S$, 
	the complexes $P^\bullet_j(L)\otimes S$ are  null-homotopic and exact for all $j>0$.
	\end{prop}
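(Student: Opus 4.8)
The plan is to exhibit an explicit global contracting homotopy, reducing everything to a single local computation. The key observation is that the normalized principal parts sheaf carries a canonical $\O_X$-linear projection $\pi\colon P_0(L)\to\O_X$, namely the one in the defining sequence $0\to\Omega^1_X\to P_0(L)\to\O_X\to 0$, and that $\pi(\dlog v)=1$ for every local generator $v$ of $L$ (because $d_L$ splits $P(L)\to L$, so $d_Lv\mapsto v$ and $\dlog v=d_Lv\otimes v\inv\mapsto 1$). Contraction against the section $\pi\in\Hom(P_0(L),\O_X)$ gives $\O_X$-linear maps $\iota_\pi\colon\wedge^iP_0(L)\otimes L^j\to\wedge^{i-1}P_0(L)\otimes L^j$, and I would set $h:=\tfrac1j\iota_\pi$. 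Since $h$ is manifestly global and $\O_X$-linear, once I show $dh+hd=\id$ on $P^\bullet_j(L)$ the same identity tensors through any local system $S$ (locally $S$ is constant, so $d$ and $h$ act factorwise on $P^\bullet_j(L)\otimes S$ and the homotopy persists); this simultaneously yields null-homotopy and exactness for all the twisted complexes.

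The heart of the argument is to unwind the differential in a local trivialization $L=\O_X\cdot u$, writing $e:=\dlog u$ so that $P_0(L)=\O_X e\oplus\Omega^1_X$ with $\pi(e)=1$ and $\Omega^1_X=\ker\pi$. Every section of $\wedge^iP_0(L)\otimes L^j$ then has the form $(\alpha+e\wedge\beta)\otimes u^j$ with $\alpha\in\Omega^i_X$, $\beta\in\Omega^{i-1}_X$. Applying the derivation-property definition of the differential, together with $d(u^j)=j\dlog(u)\,u^j=je\otimes u^j$ and $d(e)=0$, I expect to obtain
\[
d\bigl((\alpha+e\wedge\beta)\otimes u^j\bigr)=\bigl(d\alpha+je\wedge\alpha-e\wedge d\beta\bigr)\otimes u^j,
\]
where $d$ on the right is the ordinary exterior derivative. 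Carrying out this calculation carefully—keeping track of the sign produced when the universal derivation acts past $e$, and of the fact that $e\wedge e=0$ kills the would-be $je\wedge(e\wedge\beta)$ term—is the main obstacle, since the terse ``additivity and derivation property'' prescription hides several sign choices.

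Granting that local formula, the verification that $h=\tfrac1j\iota_\pi$ contracts the complex is a direct computation: since $\iota_\pi(\alpha+e\wedge\beta)=\beta$, one finds $hd=\alpha-\tfrac1j d\beta$ and $dh=\tfrac1j d\beta+e\wedge\beta$ (up to the trivializing $u^j$), whose sum is exactly $\alpha+e\wedge\beta$. Note that $\id=dh+hd$ uses in an essential way that $j$ is invertible in $\O_X$, which holds because we are in characteristic $0$ and $j>0$. Because $h$ and $d$ are globally defined and the identity $dh+hd=\id$ is checked stalkwise, it holds globally, even though the $d$-stable complement $e\wedge\Omega^{\bullet-1}_X$ used in the local picture is not itself globally defined (it depends on the generator $u$, as $\dlog(fu)=\dlog u+d\log f$). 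This is precisely why I would argue through the intrinsic operator $\iota_\pi$ rather than trying to split $P^\bullet_j(L)$ globally as the mapping cone of multiplication by $j$ on $\Omega^\bullet_X\otimes L^j$: that cone description, and the attendant contractibility, is only valid locally.
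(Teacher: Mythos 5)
Your proof is correct, and its computational core---the local shape of the differential and a homotopy that extracts the coefficient of $e=\dlog u$---is the same as the paper's. The paper argues by localizing: after trivializing $L$ and $S$ it identifies the degree-$i$ term with $\Omega^{i-1}_X\oplus\Omega^i_X$ and the complex with (up to signs and the factor $j$) the cone of the identity on the de Rham complex, contracted by an explicit matrix homotopy. Where you genuinely differ is in making the homotopy intrinsic: $h=\tfrac1j\iota_\pi$, with $\pi\colon P_0(L)\to\O_X$ the canonical projection, is a single globally defined $\O_X$-linear operator, and the identity $dh+hd=\id$ is then merely \emph{checked} in local trivializations. This is a real refinement, for two reasons. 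First, exactness is a local property but null-homotopy is not, and the global complex is not a cone of the identity (the extension $0\to\Omega^\bullet_XL^j[-1]\to P^\bullet_j(L)\to\Omega^\bullet_XL^j\to 0$ is in general non-split, its class being governed by $c_1(L)$); so your argument proves the global null-homotopy that the statement literally asserts, whereas the paper's trivialize-then-contract argument directly gives only exactness plus local null-homotopy. Second, your insistence on tracking signs and the factor $j$ is warranted rather than pedantic: the matrices as displayed in the paper need sign corrections before they square to zero or satisfy the homotopy identity, and the self-consistent local differential is exactly your $\bigl(d\alpha+je\wedge\alpha-e\wedge d\beta\bigr)\otimes u^j$, which forces the homotopy to carry the $\tfrac1j$ and hence to use characteristic zero precisely where you say it is used. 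The only cost of your route is that the one-line ``cone of the identity'' picture is replaced by a short explicit computation.
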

\begin{proof}
	The assertion being local, we may assume $L$ is trivial and $S=\C$ so the
	$i$-th term of $P^\bullet_j(L)\otimes S$ is just $\Omega^{i-1}_X\oplus\Omega^i_X$
	and the differential is $\left(\begin{matrix}d&\id\\0&d
	\end{matrix}\right).$ Then a homotopy is given by
	$\left(\begin{matrix}0&\id\\0&0
	\end{matrix}\right).$ Thus, $P^\bullet_j(L)$ is null-homotopic, hence exact.
	\end{proof}
\subsection{Log version}
The above constructions have an obvious extension to the log situation. Thus let $D$
be a divisor with normal crossings on $X$. We define $P(L)\llog{D}$ as the image of $P(L)$
under the inclusion $\Omega_X\to\Omega_X\llog{D}$, and likewise for $P_0(L)\llog{D}$. 
Then as above we get complexes 
\eqspl{}{P^\bullet_j(L)\llog{D}: L^j\to P_0(L)\llog{D}L^j\to...\to\wedge^{n+1}P_0(L)\llog{D}L^j.
}

\subsection{Foliated version}
Let $F\subset\Omega_X\llog{D}$ be an integrable subbundle
of rank $m$. Then $F$ gives rise to a foliated
De Rham complex $\wedge^\bullet(\Omega_X\llog{D}/F)$, we well as a foliated principal
parts sheaf $P^1_F(L)\llog{D}=P^1(L)\llog{D}/F\otimes L$. 
Putting these together, we obtain the foliated principal parts complexes
(where $P_{0,F}(L)\llog{D}:=P_0(L)\llog{D}/F$):
\eqspl{}{
P^\bullet_{j, F}(L)\llog{D}: L^j\to P_{0, F}(L)\llog{D}L^j\to...\to \wedge^{n-m+1}P_{0, F}(L)\llog{D}	
}
Note that the proof of Proposition \ref{exact} made no use of
of the acyclicity of the De Rham complex. Hence the same proof applies verbatim to yield
\begin{prop}\label{exact-log}
For any local system $S$,
	the complexes $P^\bullet_{j, F}(L)\llog{D}\otimes S$ are  
	null-homotopic and exact for all $j>0$.
	
\end{prop}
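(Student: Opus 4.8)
The plan is to reduce immediately to the situation of Proposition \ref{exact} and to check that its contracting homotopy survives the foliated and logarithmic modifications unchanged. Since null-homotopy and exactness are local, I would first localize so that $L$ is trivial with generator $v$ and $S=\C$. Writing $\theta=\dlog(v)$, the splitting $\exseq{\Omega_X\llog{D}/F}{P_{0,F}(L)\llog{D}}{\O_X}$ identifies $P_{0,F}(L)\llog{D}$ locally with $(\Omega_X\llog{D}/F)\oplus\O_X\theta$, so that the degree-$i$ term of $P^\bullet_{j,F}(L)\llog{D}\otimes S$ becomes $\wedge^i(\Omega_X\llog{D}/F)\oplus\theta\wedge\wedge^{i-1}(\Omega_X\llog{D}/F)$; that is, a pair $(\gamma,\beta)$ of foliated log forms of degrees $i-1$ and $i$.

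The next step is to read off the differential from the defining formula $d(\dlog(u_1)\wedge\cdots\wedge\dlog(u_i)v^j)=j\,\dlog(u_1)\wedge\cdots\wedge\dlog(u_i)\wedge\dlog(v)v^j$. Extending by additivity and the derivation property and using $\theta\wedge\theta=0$ together with $d_F\theta=0$, one finds that the differential is the odd derivation $d_F+j\,\theta\wedge(-)$, where $d_F$ is the foliated exterior derivative on $\wedge^\bullet(\Omega_X\llog{D}/F)$. In block form on pairs this reads $\mathcal D(\gamma,\beta)=(j\beta-d_F\gamma,\,d_F\beta)$, which is exactly the mapping cone of multiplication by $j$ on the foliated log de Rham complex $(\wedge^\bullet(\Omega_X\llog{D}/F),d_F)$. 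Integrability of $F$ is precisely what guarantees $d_F^2=0$, hence that this is a genuine complex.

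The crucial observation — the one the preceding remark flags — is that the off-diagonal entry is multiplication by the scalar $j$, invertible precisely because $j>0$, and that this is the only place any hypothesis on $j$ is used. Consequently the homotopy $s(\gamma,\beta)=(0,\,j\inv\gamma)$ satisfies $\mathcal Ds+s\mathcal D=\id$ by the same two-line computation as in Proposition \ref{exact}: one checks $\mathcal Ds(\gamma,\beta)=(\gamma,\,j\inv d_F\gamma)$ and $s\mathcal D(\gamma,\beta)=(0,\,\beta-j\inv d_F\gamma)$, whose sum is $(\gamma,\beta)$, the two $d_F\gamma$ contributions cancelling. I would stress that this never uses exactness, or any cohomological property, of $d_F$: it is a formal consequence of having an invertible off-diagonal term in a cone. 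This gives the null-homotopy, and exactness follows.

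The part to be careful about is the verification that the differential really acquires the clean cone form $d_F+j\,\theta\wedge(-)$ after passing to the quotient by $F$ and to logarithmic poles — in particular that $\dlog(v)$ supplies the $\theta$-direction cleanly and that no correction terms from $F$ or from $D$ intrude. Once that normal form is in hand, the homotopy, and hence the whole statement, are immediate and, as claimed, identical to the non-foliated case.
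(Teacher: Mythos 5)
Your proposal is correct and is essentially the paper's own argument: the paper proves Proposition \ref{exact} by localizing, identifying the complex with the cone of an invertible map on the (foliated log) de Rham complex, exhibiting the obvious block homotopy, and then observes—exactly as you stress—that since no acyclicity of the underlying complex is used, the proof carries over verbatim to the foliated log case. Your version is in fact slightly more careful, keeping the factor $j$ explicit and using $j\inv$ in the homotopy (the paper's matrices suppress this scalar), but the route is the same.
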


\section{Calculus on normal crossing divisors}\label{calculus}
In this section $X$ denotes a smooth variety or complex manifold and  $D$ denotes
 a locally normal-crossing divisor on  $X$.
Our aim is to show that the log complex on $X$, unlike its De Rham analogue,
can be pulled back to the normalization of $D$.
 \par
\subsection{Branch normal} 
Let $f_i:X_i\to X$ be the normalization of the $i$-fold locus of $D$.
A point on $X_i$ consists of a point on $D$ together with a choice 
of $i$ distinct local branches of $D$ at it.
There is a canonical induced normal-crossing divisor $D_i$ on $X_i$:
at a point where $x_1...x_m$ is an equation for $D$ and $x_1,...,x_i$
are the chosen branches, the equation of $D_i$ is $x_{i+1}...x_m$. 
Note the exact sequence
\eqspl{}{
\exseq{T_X\mlog{D}}{T_X}{f_{1*}N_{f_1}}	
}
where $N_{f_1}$ is the normal bundle to $f_1$ which fits in an exact sequence
\[\exseq{T_{X_1}}{f^*_1T_X}{N_{f_1}}.\]
Locally, $N_{f_1}$ coincides with $x_1\inv\O_X/\O_X$ where $x_1$ is a 'branch equation':
to be precise, if $K$ denotes the kernel of the natural surjection $f_1\inv\O_X\to\O_{X_1}$,
	then $J=K/K^2=K\otimes_{f\inv\O_X}\O_{X_1}$ is an invertible $\O_{X_1}$-module 
	locally generated by $x_1$ and
	$N_{f_1}=J\inv$. Note that
	\[N_{f_1}\otimes\O_{X_1}(D_1)=f_1^*(\O_X(D)).\]
	\par
\subsection{Pulling back log complexes}
Interestingly, even though
the differential on the pullback De Rham complex
$f_1\inv\Omega_X^\bullet$ does not extend to $f\inv\Omega_X^\bullet\otimes\O_{X_1}$,
the analogous assertion for the log complex does hold: the differential on
 $f_1\inv\Omega_X^\bullet\llog{D}$
extends to what might be called the restricted log complex:
\[f_1^*\Omega_X^\bullet\llog{D}=f_1\inv\Omega_X^\bullet\llog{D}\otimes\O_{X_1}.\]
This is due to the identity (where $x_1$ denotes a branch equation)
\[dx_1=x_1\dlog(x_1).\] 

Note that the residue map yields an exact sequence
\eqspl{log-res0}{
	0\to\Omega^1_{X_1}\llog{D_1}\stackrel{j}{\to} f_1^*\Omega^1_X\llog{D}\stackrel{\rmRes}{\to} \O_{X_1}\to 0.	
}
Note that the rsidue map commutes with exterior derivative. Therefore this sequence
induces a short exact sequence of complexes
\eqspl{}{
\exseq{\Omega^\bullet_{X_1}\llog{D_1}}{f_1^*\Omega^\bullet _X\llog{D}}{\Omega_{X_1}^\bullet{\llog{D_1}}[-1]}.	
}
Furthermore, a twisted form of the restricted log complex, called the normal log complex,
also exists:
\eqspl{Nf1}{
N_{f_1}\otimes f_1^*\Omega^\bullet_X\llog{D}	:N_{f_1}\to N_{f_1}\otimes f_1^*\Omega^1_X\llog{D}
\to...
}
this is thanks to the identity, where $\omega$ is any log form,
\[d(\omega/x_1)=(d\omega)/x_1-\dlog(x_1)\wedge\omega/x_1.\]
Now recall the exact sequence coming from the residue map
\[\exseq{\Omega_{X_1}\llog{D_1}}{f_1^*\Omega_X\llog{D}}{\O_{X_1}}\]
In fact, it is easy to check that this exact sequence has extension class
$c_1(N_{f_1})$ hence identifies $f_1^*\Omega_X\llog{D}$ with $P_0(N_{f_1})$ so that
	the normal log complex \eqref{Nf1} may be identified with the principal parts
	complex $P^\bullet(N_{f_1})$:
	\begin{lem}\label{normal-log-complex-lem}
	The normal log complex $N_{f_1}\otimes f_1^*\Omega_X\llog{D}$ is 
	isomorphic to $P^\bullet(N_{f_1})$, hence is exact.
	\end{lem}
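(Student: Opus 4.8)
The plan is to build the asserted isomorphism of complexes explicitly and then read off exactness from Proposition \ref{exact-log}. Three points must be verified: that the degree-one sheaf $f_1^*\Omega^1_X\llog{D}$ is isomorphic to the normalized principal parts sheaf $P_0(N_{f_1})$; that this isomorphism propagates to all exterior powers, identifying the terms of the two complexes; and that under these identifications the twisted log differential becomes the principal parts differential.

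I would first pin down the sheaf isomorphism by comparing extension classes. The residue sequence \eqref{log-res0} presents $f_1^*\Omega^1_X\llog{D}$ as an extension of $\O_{X_1}$ by $\Omega^1_{X_1}\llog{D_1}$, whereas $P_0(N_{f_1})\llog{D_1}$ is, by its construction in \S\ref{principal}, the extension of $\O_{X_1}$ by $\Omega^1_{X_1}\llog{D_1}$ with class $c_1(N_{f_1})$ in $\Ext^1(\O_{X_1},\Omega^1_{X_1}\llog{D_1})=H^1(X_1,\Omega^1_{X_1}\llog{D_1})$. It therefore suffices to compute the class of \eqref{log-res0}, a routine \v{C}ech calculation: over a chart with branch equation $x_1$ the section $\dlog(x_1)$ splits the residue map, and on overlaps the two local splittings differ by $\dlog(x_1^{(\alpha)})-\dlog(x_1^{(\beta)})=\dlog(g_{\alpha\beta})$, where $g_{\alpha\beta}$ is the transition cocycle relating the branch equations; this \v{C}ech cocycle represents $c_1(N_{f_1})$ in the conventions of \S\ref{calculus}. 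Hence the two extensions coincide, giving a canonical isomorphism $f_1^*\Omega^1_X\llog{D}\simeq P_0(N_{f_1})\llog{D_1}$ that carries $\dlog(x_1)$ to the distinguished $\dlog$-generator (up to sign).

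Since $f_1^*\Omega^k_X\llog{D}=\wedge^k f_1^*\Omega^1_X\llog{D}$, taking exterior powers of this rank-$(\dim X_1+1)$ bundle isomorphism and tensoring by $N_{f_1}$ identifies the degree-$k$ term of the normal log complex \eqref{Nf1} with the degree-$k$ term of the log principal parts complex $P^\bullet_1(N_{f_1})\llog{D_1}$ of \S\ref{principal} (the member $j=1$; since every $P^\bullet_j$ with $j>0$ is exact, the precise twist is immaterial for the conclusion). To see this is a morphism of complexes I would invoke the twisting identity $d(\omega/x_1)=(d\omega)/x_1-\dlog(x_1)\wedge\omega/x_1$: under the identification of the previous paragraph, wedging by $\dlog(x_1)$ becomes wedging by the distinguished generator, which---with the coefficient $1$ matching $j=1$---is exactly the rule defining the principal parts differential, while $(d\omega)/x_1$ supplies the $\Omega^\bullet_{X_1}\llog{D_1}$-direction derivative common to both complexes. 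A direct check on generators $\dlog(u_1)\wedge\cdots\wedge\dlog(u_k)\otimes x_1^{-1}$ shows the two differentials agree up to a sign depending only on $k$, which is absorbed by rescaling the degree-$k$ identification by a suitable $\pm 1$; this promotes the termwise isomorphism to an isomorphism of complexes.

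Finally, $P^\bullet_1(N_{f_1})\llog{D_1}$ is a log principal parts complex with $j=1>0$, so Proposition \ref{exact-log} (with zero foliation and trivial local system) shows it is null-homotopic, hence exact; exactness being local, the normal log complex is exact as well, which proves the lemma. The main obstacle is the differential-matching step: whereas the extension-class computation is routine, upgrading the termwise identification to a genuine morphism of complexes takes care, above all in the sign bookkeeping required to reconcile the twisting identity with the principal parts differential.
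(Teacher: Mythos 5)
Your proposal is correct and follows essentially the same route as the paper: the paper likewise identifies $f_1^*\Omega^1_X\llog{D}$ with the (log) normalized principal parts sheaf $P_0(N_{f_1})$ by computing that the residue extension \eqref{log-res0} has class $c_1(N_{f_1})$, identifies the resulting complexes via the twisting identity $d(\omega/x_1)=(d\omega)/x_1-\dlog(x_1)\wedge\omega/x_1$, and then concludes exactness from the null-homotopy of Propositions \ref{exact} and \ref{exact-log}. Your \v{C}ech computation of the extension class, the explicit matching of differentials with its sign bookkeeping, and the observation that the relevant twist is $j=1$ (immaterial for exactness) simply supply the details the paper dismisses as ``easy to check.''
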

Similarly, a pull back log complex $f_k^*\Omega^\bullet_X\llog{D}=f_k\inv\Omega_X^\bullet\llog{D}\otimes\O_{X_k}$
exists for all $k\geq 1$.
A similar
twisted log complex also exists
the determinant of the normal bundle $N_{f_k}$:
\eqspl{}{
	\det N_{f_k}\otimes f_k^*\Omega_X^\bullet\llog{D}:
	\det N_{f_k}\to \det N_{f_k}\otimes\Omega^1_X\llog{D}\to...
}
This comes from (where $x_1,...,x_k$ are the branch equations at a given point of $X_k$):
\[d(\omega/x_1...x_k)=d\omega/x_1...x_k-\dlog(x_1...x_k)\omega/x_1...x_k).\]
\subsection{Iterated residue}
We have a short exact sequence of vector bundles on $X_k$:
\eqspl{}{
\exseq{\Omega_{X_k}\llog{D_k}}{f_k^*\Omega_X\llog{D}}{\nu_k\otimes\O_{X_k}}	
}
where $\nu_k$ is the local system of branches of $D$ along $X_k$ and the right map
is multiple residue. Taking exterior powers, we get various exact Eagon-Northcott complexes.
In particular, we get surjections, called iterated Poinca\'e residue:
\eqspl{}{
f_k^*\Omega_X^i\llog{D}\to \Omega_{X_k}^{i-k}\llog{D_k}\otimes {\det}_{\C}(\nu_k), i\geq k,	
}
\eqspl{}{
f_k^*\Omega_X^i\llog{D}\to\wedge^i_\C\nu_k\otimes \O_{X_k}, i\leq k.	
}
${\det}_\C(\nu_k)$ is a rank-1 local system on $X_k$ which may be called the 'normal
orientation sheaf'. The maps for $i\geq k$ together yield a surjection
\eqspl{}{
f_k^*\Omega_X^\bullet\llog{D}\to \Omega_{X_k}^\bullet\llog{D_k}[-k]\otimes\det(\nu_k).	
}
\section{Comparing log and log plus complexes}\label{comparing}
In this section $X$ denotes a log-symplectic smooth variety with log-symplectic form 
$\Phi$ and corresponding Poisson vector $\Pi=\Phi\inv$, and $D$ denotes the degeneracy divisor
of $\Pi$ or polar divisor of $\Phi$. Our aim is to prove Theorem \ref{iso} which
shows that deformations of $(X, \Phi)$ coincide with locally trivial deformations
of $(X, \Phi, D)$ and are unobstructed.\par 
\subsection{Setting up}
We will use $\Omega_X^{+\bullet}$ to denote $\bigoplus\limits_{i>0}\Omega^i_X$
and similarly for the log versions. This to match with the Lichnerowicz-Poisson complex
$T^\bullet_X$ and $T^\bullet_X\mlog{D}$. Thus, interior multiplication by $\Phi$ induces
and isomorphism $T^\bullet_X\mlog{D}\to\Omega^\bullet_X\llog{D}$. Equivalently, $\Phi$ itself
is a form im $\Omega^2_X\llog{D}$ inducing a nondegenerate pairing on $T_X\mlog{D}$.
In terms of local coordinates, at a point of multiplicity $m$ on $D$,
we have a basis for $\Omega_X\llog{D}$ of the form
\[\eta_1=\dlog(x_1),...,\eta_m=\dlog(x_m), \eta_{m+1}=\dlog(x_{m+1}),...\]
and then
\[\Phi=\sum b_{ij}\eta_i\wedge\eta_j.\]
\par
We have an inclusion of complexes
\[T_X^\bullet\mlog{D}\to T^\bullet_X\]
where, for $X$ compact K\"ahler, the first complex controls 'locally trivial' deformations of
$(X, \Pi)$, i.e. deformations of $(X, \Pi)$ inducing a locally trivial deformation
of $D=[\Pi^n]$, and the second complex controls all deformations of $(X, \Pi)$.
It is known (see e.g. \cite{logplus})
 that locally trivial deformations of $(X, \Pi)$ are always unobstructed
and have an essentially Hodge-theoretic (hence topological) character, so one is
interested in conditions to ensure that the above inclusion induces an isomorphism 
on deformation spaces; as is well known, the latter would follow if one can show
that the cokernel of this inclusion has vanishing $\hh^1$. \par
Our approach to this question starts with the above 'multiplication by $\Phi$' isomorphism
\[(T_X^\bullet\mlog{D}, [\ .\ ,\Pi])\to (\Omega^{+\bullet}_X\llog{D}, d).\]
This isomorphism extends to an isomorphism to $T^\bullet_X$ with a certain subcomplex
of $\Omega^{+\bullet}_X(*D)$, the meromorphic forms regular off $D$, that we 
call the log plus complex and denote by
$\Omega_X^{+\bullet}\llogp{D}$.

Our goal then becomes that of comparing the log and log-plus complexes.
To this end we introduce a filtration on $\Omega^{+\bullet}_X\llogp{D}$, essentially
the filtration induced by the exact sequence
\[\exseq{T_X\mlog{D}}{T_X}{f_{1*}N_{f_1}}\]
and its isomorphic copy
\[\exseq{\Omega_X\llog{D}}{\Omega_X\llogp{D}}{f_*N_{f_1}}\]
where $f_1:X_1\to D\subset X$ is the normalization of $D$ and $N_{f_1}$ is the associated
normal bundle ('branch normal bundle'). We will show that the first graded piece is always
an exact complex. The second graded piece is much more subtle. We will show that it is locally
exact in degree 0 unless the log-symplectic form $\Phi$, i.e. the matrix $(b_{ij})$
above satisfies some special relations with
integer coefficients.

The computations of this section are all local in character, though the applications are global.

\subsection{Residues and duality}
Let $f_i:X_i\to X$ be the normalization of the $i$-fold locus of $D$, $D_i$ the induced
normal-crossing divisor on $X_i$. Thus a point of $X_i$ consists of a point $p$
of $D$ together with a choice of an unordered set $S$ of $i$ branches of $D$ through $p$
and $D_i$ is the union of the branches of $D$ not in $S$.
We consider first the codimension-1 situation.
As above, we have a residue exact sequence
\eqspl{log-res}{
0\to\Omega^1_{X_1}\llog{D_1}\stackrel{j}{\to} f_1^*\Omega^1_X\llog{D}\stackrel{\rmRes}{\to} \O_{X_1}\to 0	
} (the right-hand map given by residue is locally  evaluation on $x_1\del_{x_1}$
where $x_1$ is a local equation for the branch of $D$ through the given point of $X_1$
). Note that if $\eta$ comes from a closed form on $X$ near $D$ then $\rmRes(\eta)$ is a constant.\par
Dualizing \eqref{log-res}, we get
\eqspl{co-res}{
0\to\O_{X_1}\stackrel{\check{R_1}}{\to}f_1^*T_X\mlog{D}\stackrel{\check{j}}{\to}T_{X_1}\mlog{D_1}\to 0,
} where the left-hand map, the 'co-residue', is locally multiplication by $x_1\del_{x_1}$
where $x_1$ is a branch equation).  Set
\[v_1=x_1\del_{x_1}.\]
Then $v_1$ is canonical
as section of $f_1^*T_X\mlog{D}$ , independent
of the choice of local equation $x_1$. By contrast, $\del_{x_1}$ as section of $f_1^*T_X$ is canonical
only up to a tangential field to $X_1$, and generates $f_1^*T_{X}$ modulo $T_{X_1}\mlog{D}$.\par
Now $f_1^*\Omega^1_X\llog{D}$ and $f^*_1T_X\mlog{D}$ admit mutually inverse isomorphisms 
\[i_{X_1}\Pi:=\carets{\Pi, .}_{X_1}=f_1^*\carets{\Pi,.},
i_{X_1}\Phi:= \carets{ \Phi, .}_{X_1}=f_1^*\carets{\Phi, .}.\] The composite 
\[\check{j}\circ i_{X_1}\Pi\circ j:\Omega^1_{X_1}\llog{D_1}\to T_{X_1}\mlog{D_1}\]
  has a rank-1 kernel that is
 	the kernel of the Poisson vector on $X_1$ induced by $\Pi$, aka the conormal
 	to the symplectic foliation on $X_1$. 
 	Now set
 	\[\psi_1=i_{X_1}(\Phi)(v_1)=\carets{\Phi, v_1}_{X_1}.\]
 	Then $\psi_1$ is locally the form in $\Omega_{X_1}\llog{D_1}$
 	 denoted by $x_1\phi_1$ in \cite{logplus}. 
 	Again $\psi_1$ is canonically defined, independent of choices
 	and corresponds to the first column of the $B=(b_{ij})$ matrix for a local coordinate system
 	$x_1, x_2,...$ compatible with the normal-crossing divisor $D$.
By contrast, $\phi_1$,
 	which depends on the choice of local equation $x_1$, is canonical up to
 	a log form in $\Omega_{X_1}\llog{D_1}$ and generates $\Omega_{X_1}\llogp{D_1}$
 	modulo the latter.\par
 	In $X_1\setminus
 	D_1$, $\Phi$ is locally of the form $\dlog(x_1)\wedge dx_2+$(symplectic), 
 	so there $\psi_1=dx_2$. Note that by skew-symmetry we have
 	\[\rmRes\circ i_{X_1}(\Phi)\circ\check{R_1}=0.\]
 	Thus, locally $\psi_1\in\Omega_{X_1}\llog{D_1}$. In terms of the matrix $B$ above,
 	$\psi_1=\sum\limits_{j>1} b_{1j}\dlog(x_j)$. Note that $\psi_1$ which corresponds to the
 	Hamiltonian vector field $v_1$, is a closed form. Consequently,
$\psi_1$  defines a foliation on $X_1$. Let $Q_1^\bullet=\psi_1\Omega_{X_1}^\bullet$
	be the associated foliated De Rham complex $\psi_1\Omega_{X_1}^\bullet$:
	\[Q_1^0=\O_{X_1}\phi_1\to Q_1^1=\psi_1\Omega^1_{X_1}\simeq\Omega^1_{X_1}/\O_{X_1}\psi_1\to...
	\to Q_1^i=\wedge^iQ_1^1\to...\]
 	 endowed with the foliated differential.
 	
 	\par
 	Note that the residue exact sequence \eqref{log-res} induces the Poincar\'e residue
 	sequence
 	\[\exseq{\Omega^\bullet_{X_1}\llog{D_1}}{f_1^*\Omega^\bullet_X\llog{D}}{\Omega^\bullet_{X_1}
 		\llog{D_1}[-1]}.\]
 	Again the Poincar\'e residue of a closed form is closed.
 	Now the exact sequence
 	\[\exseq{T_X\mlog{D}}{T_X}{f_{1*}N_{f_1}}\]
 	yields
 	\eqspl{log-logplus}{\exseq{\Omega_X\llog{D}}{\Omega_X\llogp{D}}{f_{1*}N_{f_1}}.
 	} and this sequence induces the $\mathcal F_\bullet$ filtration on the
 log-plus complex $\Omega_X^\bullet\llogp{D}$.\par
\subsection{First graded piece}
 
 Now consider first the first graded 
 $\mathcal G^\bullet_1=(\mathcal F^\bullet_1/\mathcal F^\bullet_0)[1]$
 which is supported in codimension 1.
 (the shift is so that $\mathcal G^\bullet$ starts in degree 0).
 Then $\mathcal G_1^\bullet$ is a (finite) direct image of a complex of $X_1$ modules:
 \[\mathcal E_1: N_{f_1}\to N_{f_1}\otimes Q_1\to N_{f_1}\otimes Q_1^2\to ...\]
 Using Lemma \ref{normal-log-complex-lem}, we can easily show:
 \begin{prop}
 	$\mathcal E_1$ is isomorphic  to $P^\bullet_{R_1'}(N_{f_1})$, hence is 
 	null-homotopic and exact, hence $\mathcal G^\bullet_1$ is exact.
 	\end{prop}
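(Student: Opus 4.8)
The plan is to identify $\mathcal E_1$ with a foliated principal parts complex and then quote Proposition \ref{exact-log}. The crucial input is Lemma \ref{normal-log-complex-lem}: it identifies $f_1^*\Omega^1_X\llog{D}$ with $P_0(N_{f_1})$, hence the normal log complex $N_{f_1}\otimes f_1^*\Omega^\bullet_X\llog{D}$ with $P^\bullet(N_{f_1})$. Under this identification the principal parts differential is the connection differential $\omega\mapsto d\omega-\dlog(x_1)\wedge\omega$ dictated by the identity $d(\omega/x_1)=d\omega/x_1-\dlog(x_1)\wedge\omega/x_1$, and the Poincar\'e residue of \eqref{log-res} separates the branch direction $\dlog(x_1)$ from the intrinsic directions $\Omega^1_{X_1}\llog{D_1}$. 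I would first make this dictionary completely explicit, so that every term of $P^\bullet(N_{f_1})$ is expressed in a local frame built from $\dlog(x_1)$ together with a frame of $\Omega^1_{X_1}\llog{D_1}$.

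Next I would introduce the foliation. Because $\psi_1$ is closed, the subsheaf $R_1'$ it generates inside $P_0(N_{f_1})=f_1^*\Omega^1_X\llog{D}$ is integrable, so the foliated construction of \S\ref{principal} yields $P^\bullet_{R_1'}(N_{f_1})$. I would then match this with $\mathcal E_1=N_{f_1}\otimes Q_1^\bullet$ termwise and compatibly with differentials. Termwise the key facts are $\phi_1\wedge\psi_1=0$ (from $\psi_1=x_1\phi_1$), which is exactly the relation collapsing $\Omega^\bullet_{X_1}\llog{D_1}$ to $Q_1^\bullet$, together with $\phi_1=\psi_1\otimes v$ for a local generator $v=x_1\inv$ of $N_{f_1}$, giving $\phi_1\wedge\beta=(\psi_1\wedge\beta)\otimes v$. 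For the differential I would use $d\phi_1=-\dlog(x_1)\wedge\phi_1$ (a consequence of $d\psi_1=0$) to check that the differential induced on $\mathcal G^\bullet_1$ by the log-plus complex coincides with the foliated principal parts differential.

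Granting $\mathcal E_1\simeq P^\bullet_{R_1'}(N_{f_1})$, Proposition \ref{exact-log}---whose homotopy, as noted there, never uses acyclicity of the De Rham complex---furnishes a null-homotopy, so $\mathcal E_1$ is null-homotopic and exact; and since $\mathcal G^\bullet_1=f_{1*}\mathcal E_1$ with $f_1$ finite (so $f_{1*}$ exact), $\mathcal G^\bullet_1$ is exact. The main obstacle is the differential comparison of the second paragraph. One must reconcile the branch direction $\dlog(x_1)$, which enters through the normal-bundle twist, with the Hamiltonian direction $\psi_1$ defining the foliation, and verify that after passing to the $\psi_1$-quotient the connection differential $d-\dlog(x_1)\wedge$ descends to the foliated principal parts differential and that the homotopy of Proposition \ref{exact-log} survives. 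This is precisely where the closedness of $\psi_1$ (and of $\dlog(x_1)$), hence integrability of $R_1'$, is essential.
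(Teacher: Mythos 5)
Your proposal is correct and takes essentially the same route as the paper: the paper's (very terse) proof is exactly to invoke Lemma \ref{normal-log-complex-lem} to identify the normal log complex $N_{f_1}\otimes f_1^*\Omega^\bullet_X\llog{D}$ with $P^\bullet(N_{f_1})$, pass to the quotient by the integrable rank-one subsheaf $R_1'$ spanned by the closed form $\psi_1$ to obtain $P^\bullet_{R_1'}(N_{f_1})$, and then conclude by Proposition \ref{exact-log} together with exactness of $f_{1*}$ for the finite map $f_1$. Your write-up simply makes explicit the termwise identification (via $\phi_1=\psi_1\otimes x_1\inv$, $\phi_1\wedge\psi_1=0$) and the differential comparison ($d\phi_1=-\dlog(x_1)\wedge\phi_1$) that the paper leaves implicit.
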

\subsection{Second graded piece}
 	Next we study $\mathcal G_2$, which is supported on $X_2$.
We consider a connected, nonempty
 open subset $W\subset X_2$, for example an entire component, over which the
 	'normal orientation sheaf'
 	$\nu_2: X_{2,1}\to X_2$, i.e.
 	 the local $\Z_2$-system   of branches of $X_1$ along
 	$X_2$, is trivial (we can take $W=X_2$ if, e.g.  $D$ has global normal crossings).
 	Such a subset $W$ of $X_{2}$ is  said to be a \emph{normally split} subset of $X_2$
 	and a \emph{normal splitting} of $W$ is an ordering of the branches is specified.
 	Obviously $X_2$ is covered by such subsets $W$. Likewise, for a subset $Z\subset X_k$.
 	\subsubsection{Iterated residue}
 	Over a normally split subset $W$, we have a diagram
 	\eqspl{res2}{
 	\begin{matrix}
 		0\to 2\O_{W}\stackrel{\check R_2}{\to}&f_2^*T_X\mlog{D}|_W&\to T_{X_2}\mlog{D_2}|_W\to 0\\
 		&\downarrow&\\
 		0\to\Omega_{W}\llog{D_2}\to&f_2^*\Omega_X\llog{D}|_W&\stackrel{R_2}{\to} 2\O_{W}\to 0
 			\end{matrix}
 			}\
 		where $\check R_2$ is the map induced by $\check R_1$.
 		The composite map $R_2\check R_2:2\O_{W}\to 2\O_{W}$ is just the alternating form 
 		induced by $\Phi$, and has the form $c_WH_2$ where $H_2$ is the hyperbolic plane
 		$\left( \begin{matrix}0&1\\-1&0\end{matrix}\right)$.
 		In terms of a local frame for $\Omega_X\llog{D}$ containing $\dlog(x_1),
 		\dlog(x_2)$, 
 		$c_W$ is  the coefficient of $\dlog(x_1)\wedge\dlog(x_2)$ in $\Phi$. 
 		Note $c_W$ must be constant because $\Phi$ is closed.
 		In fact we have
 		\[c_W=\rmRes_1\rmRes_2(\Phi)\]
 		where $\rmRes_i$ denote the (Poincar\'e) residues along the branches of $X_1$
 		over $X_2$. Set 
 		\[\rmRes_{W}(\Phi):=c_W.\] This is essentially what is
 		called the biresidue by Matviichuk et al., see \cite{pym-loctorelli}. 
 		Thus, when $c_W\neq 0$, we have a basis for
 		the log forms 
 		\[\eta_1=\dlog(x_1),...,\eta_m=\dlog(x_m), \eta_{m+1}=dx_{m+1},...,\eta_{2}n=dx_{2n}\]
 		 		$m=$ multiplicity of $D$, $m\geq 2$, and then
 		\[\Phi=\sum b_{ij}\eta_i\wedge\eta_j\]
 		where
 		\[b_{12}=-b_{21}=c_W.\]
 		If $W$  may be not be normally orientable  (e.g. an entire component of $X_2$)
 		then $c_W$ is defined only up to sign; if $c_W=0$ 
 		we say that $W$ is non-residual, otherwise it is residual.
 		\par \subsubsection{Non-residual case}
 	Here we consider the case $c_W=0$.\par
 	Note that in that case we may express $\Phi$ along $W$ in the form
 	\[\Phi=\dlog(x_1)\gamma_3+\dlog(x_2)\gamma_4+\gamma_5\]
 	where the gammas are closed log forms in the coordinates on $W$, i.e. $x_3,...,x_{2n}$.
 	Moreover, $\gamma_3\wedge\gamma_4\neq 0$ because $\Phi^n$ is
 	divisible by $\dlog(x_1)\dlog(x_2)$.
 	Also, unless $\gamma_3, \gamma_4$ are both holomorphic (pole-free),
 	there is another component $W'$ of $X_2$ such that $c_{W'}\neq 0$
 	(in particular, $W\cap D_2\neq\emptyset$). 
 	Hence if no such $W'$ exists, 
 	we may  by suitably modifying coordinates,
 	assume locally that $\gamma_3=dx_3, \gamma_4=dx_4$.
 	A similar argument,
 	or induction, applies to $\gamma_5$. 
 	This means we are essentially in the P-normal case considered in \cite{qsymplectic}.
 	This we conclude: 
 	\begin{lem}
 		Unless $\Pi$ is P-normal, there exists a nonempty residual  open subset
 	$W$ of $X_2$.
 	\end{lem}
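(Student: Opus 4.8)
The plan is to prove the contrapositive: assuming that $X_2$ contains no residual open subset --- equivalently that the biresidue $c_W=\rmRes_W(\Phi)$ vanishes for every connected open $W\subset X_2$ --- I will show that $\Pi$ is P-normal. By the discussion preceding the lemma, the vanishing of $c_W$ for all $W$ means exactly that, at every point $p$ of $D$ in adapted local coordinates $x_1,\dots,x_m,x_{m+1},\dots,x_{2n}$ (with $x_1,\dots,x_m$ branch equations), the ``log-log'' block of the coefficient matrix $B=(b_{ij})$ vanishes: $b_{ij}=0$ whenever $i,j\le m$, since $b_{ij}$ is precisely the biresidue at the double-point stratum $\{x_i=x_j=0\}$.

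First I would fix a point of the double locus with branches $x_1,x_2$ and use $c_W=0$ there to write $\Phi=\dlog(x_1)\wedge\gamma_3+\dlog(x_2)\wedge\gamma_4+\gamma_5$, where $\gamma_3,\gamma_4,\gamma_5$ are closed log forms in the remaining coordinates $x_3,\dots,x_{2n}$; the absence of a $\dlog(x_1)\wedge\dlog(x_2)$ term is exactly the vanishing of the biresidue. Next I would record that $\gamma_3\wedge\gamma_4\neq 0$: this is forced because $\Phi^n$ is divisible by $\dlog(x_1)\dlog(x_2)$ (both branches are genuine poles of $\Phi$), and the $\dlog(x_1)\wedge\dlog(x_2)$-part of $\Phi^n$ is governed by $\gamma_3\wedge\gamma_4\wedge\gamma_5^{n-2}$, which therefore cannot vanish.

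The crucial step is to upgrade $\gamma_3,\gamma_4$ from merely closed log forms to honest holomorphic (pole-free) forms, and this is where the global hypothesis enters. If, say, $\gamma_3$ had a pole along another branch $x_j$, then $\Phi$ would contain a nonzero $\dlog(x_1)\wedge\dlog(x_j)$ term; but that coefficient is the biresidue at the double stratum $\{x_1=x_j=0\}$, so this stratum would furnish a residual open set $W'$, contradicting the hypothesis that none exists. Hence $\gamma_3,\gamma_4$ are pole-free, and being closed they are locally exact; using $\gamma_3\wedge\gamma_4\neq 0$ to guarantee independence I can change coordinates so that $\gamma_3=dx_3$ and $\gamma_4=dx_4$. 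I would then apply the same argument, or induct on the number of log branches and the dimension, to the residual piece $\gamma_5$, which is again a closed log $2$-form in fewer variables whose biresidues still vanish; this brings $\Phi$ into the standard form in which poles occur only through $\dlog$ factors paired with pole-free forms, i.e.\ the P-normal form of \cite{qsymplectic}.

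The main obstacle I anticipate is precisely this global-to-local bridge in the crucial step: the vanishing of the biresidue at one double point does not by itself control the polar behaviour of $\gamma_3,\gamma_4$ along the other branches, so I must carefully track how a putative pole of $\gamma_3$ along a branch $x_j$ manifests as a nonzero biresidue at the stratum $\{x_1=x_j=0\}$ and verify that this stratum is genuinely a component of $X_2$ meeting the chart, so that the ``no residual $W'$'' hypothesis applies to it. A secondary difficulty is the induction on $\gamma_5$: I must ensure the successive coordinate normalizations remain mutually compatible and that the resulting local structure matches the definition of P-normal used in \cite{qsymplectic}, rather than merely a pointwise Darboux-type form.
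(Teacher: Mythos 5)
Your proposal is correct and follows essentially the same route as the paper: the same local decomposition $\Phi=\dlog(x_1)\gamma_3+\dlog(x_2)\gamma_4+\gamma_5$ in the non-residual case, the same use of $\Phi^n$ being divisible by $\dlog(x_1)\dlog(x_2)$ to get $\gamma_3\wedge\gamma_4\neq 0$, the same key observation that a pole of $\gamma_3$ or $\gamma_4$ along a third branch would create a residual stratum $W'$, and the same coordinate normalization plus induction on $\gamma_5$ to reach the P-normal form of \cite{qsymplectic}. The obstacles you flag (the global-to-local bridge and the compatibility of successive normalizations) are exactly the points the paper passes over quickly, so your more careful framing is a faithful elaboration rather than a different argument.
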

 		


\subsubsection{Residual case: identifying $\mathcal G_2$}
Next we analyze  a residual normally oriented open subset $W\subset X_2$.			
As above, we get a composite map of $R'_2:2\O_{W}\to f_2^*\Omega_X\llog{D}|_W$
 	, whose image we denote by $M_{2W}$. It has a local basis 
 		$(\psi_{11}=x_1\phi_1, \psi_{12}=x_2\phi_2)$
 		corresponding to the basis $(e_1, e_2)$ of $2\O_{W}$. 
 		In term of the $B$-matrix, we have
 		\[\psi_{11}=\sum b_{1j}\eta_j=-\sum b_{j1}\eta_j, 
 		\psi_{12}=-\sum b_{2j}\eta_j=\sum b_{j2}\eta_j.\]
 		As $\psi_{11}, \psi_{12}$ are closed, $M_2$ is integrable.
 	 Let $\bar\Omega$
 		denote the quotient $f_2^*\Omega_X\llog{D}|_W/M_{2W}$. Then we have an 
 		isomorphism 
 		\eqspl{omegabar}{\bar\Omega\to \Omega_{W}\llog{D_2}
 		} given explicitly
 		by\[\bar\omega\mapsto \omega- \Res_1(\omega)\psi_{12}/c_W-\Res_2(\omega)\psi_{11}/c_W\]
 		(because $\Res_2(\psi_{11})=\Res_1(\psi_{12})=c_W$, residues with respect
 		to the two branches of $D$).
 Now set $N_2=\det N_{f_2}$, an invertible sheaf on $X_2$. Then $\mathcal G^\bullet_2=
 (\mathcal F^\bullet_2/\mathcal F^\bullet _1)[2]$ is the direct image of a complex
 on $X_2$:
 \eqspl{}{
 \mathcal E^\bullet _2: N_2\to N_2\otimes\bar\Omega\to N_2\otimes\wedge^2\bar\Omega\to...	
 }
where a local generator of $N_2$ has the form $1/x_1x_2$ and the differential has the form
\[\bar\omega/x_1x_2\mapsto d\bar\omega/x_1x_1\pm (\bar\omega/x_1x_2)\dlog(x_1x_2).\]
\subsubsection{Zeroth differential}
Using the identification \eqref{omegabar}, the zeroth differential has the form
\eqspl{g}{
\tilde d(g/x_1x_2)=\frac{1}{x_1x_2}(dg+g(\dlog(x_1x_2)- (\psi_{11}+\psi_{12})/c_W)),
g\in\O_{X_2}.
}
The form $\psi_2=-\dlog(x_1x_2)+ (\psi_{11}+\psi_{12})/c_W$ has zero residues with
respect to $x_1, x_2$, hence yields a form in $\Omega_{X_2}\llog{D_2}$.
Changing the local equations $x_1, x_2$ changes $\psi$ by adding a
holomorphic (pole-free) form on $X_2$.\par
For $g$ nonzero \eqref{g} can be rewritten
\eqspl{gg}{
\tilde d(g/x_1x_2)=\frac{g}{x_2x_2}(\dlog(g)-\psi_2)	
} 
When does this operator have a nontrivial kernel? 
First, if $g$ is constant then $\psi_2=0$ on $W$ which is im[possible
if $W$ meets $D_2$.
Next, locally at a point $x\in W\setminus D_2\cap W$,  clearly $g/x_1x_2$
holomorphic and nonzero in the kernel exists locally since $\psi_2$
is closed and holomorphic so $\psi_2=dh$ for a holomorphic function $h$
 and we can take $g=e^h$.
Moreover nonzero solutions to $d(g/x_1x_2)=0$  differ by a multiplicative constant.
The condition that the local solutions patch is clearly that
$\frac{1}{2\pi i}\int\limits_\gamma \psi_2$ be an integer for any
loop $\gamma$ in $W\setminus D_2\cap W$.
Now $\psi_2$ is defined only modulo a holomorphic form on $X_2$
while $H_1(W\setminus D_2\cap W)$ is generated modulo $H_1(W)$
 by small loops normal to components
of $D_2$, So the relevant condition is just integrality over such loops $\gamma$.\par 
At a simple point of $D_2\cap W$, the condition that $g$ exist locally as a holomorphic
function with no pole on $D_2$ is clearly that for $\gamma$ as above,
oriented positively, 
 the integer $\frac{1}{2\pi i}\int\limits_\gamma \psi_2$ is nonnegative, so that $g$ has
 no pole on $D_2$.
In other words, that the sum of the first 2 columns of the $B$ matrix,
normalized so that $b_{12}=-b_{21}=1$, should be a nonnegative integer vector.
Finally by Hartogs, if $g$ is holomorphic off the singular locus of $D_2\cap W$, it extends 
holomorphically to $W$.\par
\subsubsection{Special components}
Now let $Z$ be a component of $D_2\cap W$ and assume $W$ and $Z$ are both normally split
so that the branches of $D$ along $W$ may be labelled 12 while those along $Z$ may be labelled
123. Thus branches of $X_2$ over $Z$ are labelled 12, 23, 31 and the preceding discussion
shows that the zeroth differential has nontrivial kernel along $Z$ only if
the iterated residues of $\Phi$ along these branches, denoted $c_{21}, c_{23}, c_{31} $,
assuming $c_{12}\neq 0$, satisfy
\eqspl{special}{
	c_{23}+c_{31}=kc_{21}, k\in\N.
}
We call such a component $Z$ \emph{special}; then $W$ is said to be special if every (normally split)
component of $D_2\cap W$ is special.\par
What about the normally split hypothesis? Suppose first $W$ is contained
in a connected open set $W'$ which is not normally split. 
Then as $c_{12}$ is locally constant in $W'$ it follows that $c_{12}=0$, i.e. $W$ is not residual.
Now suppose $Z$ is contained in $Z'$ open connected and not normally split. 
Then monodromy acts on the branches of $X_2$ along $Z'$ cyclically
and consequently the $c_{ij}$ above are all equal. Then \eqref{special} holds automatically with $k=2$
so $Z$ is special.\par

\subsubsection{Conclusion}
What we have so far proven is the following: if
$W$ is a normally oriented residual  open subset of of $X_2$ 
 then the stalk of the zeroth cohomology $\H^0(\mathcal G_2^\bullet)$
vanishes somewhere on $W$ \emph{unless} either\par
(i) $W\cap D_2=\emptyset$, or\par
(ii)  $W$ is special.\par
	

 Note that if the stalk of  $\H^0(\mathcal G_2^\bullet)$
 vanishes somewhere in $W$, then 
 because $\mathcal G^0_2$ is coherent and torsion-free, 
 it follows that $H^0(\mathcal G^\bullet_2)|_W=0$,
 hence a similar vanishing holds for the entire component of $X_2$
 containing $W$. Now recall that, minding the index shift, if
 $H^0(\mathcal G^\bullet_2)=0$ then the cokernel of the inclusion
 $\Omega^{+\bullet}_X\llog{D}\to \Omega^{+\bullet}_X\llogp{D}$
 has vanishing $\hh^1$ (and $\hh^0$). On the other hand, it is well known
 (see e.g. \cite{logplus}) that $\Omega_X^{+\bullet}\llog{D}\simeq T_X^\bullet\mlog{D}$
 controls deformations of $(X, \Phi)$ or $(X, \Pi)$ where $D$ deforms locally trivially,
 and those deformations are unobstructed thanks to Hodge theory.

Summarizing this discussion, we conclude:
\begin{thm}\label{iso}
	Let $(X, \Phi)$ be a log-symplectic manifold with polar divisor $D$.
With notations as above, let 
\[\Omega^{+\bullet}_X\llog{D}=\bigoplus\limits_{i>0}\Omega^i_X\llog{D}, 
\Omega^{+\bullet}_X\llogp{D}=\bigoplus\limits_{i>0}\Omega^i_X\llogp{D}.\]

Then
 the inclusions	
\[\Omega_X^{+\bullet}\llog{D}\to\Omega_X^{+\bullet}\llogp{D},\]
\[ T_X^\bullet\mlog{D}\to T^\bullet_X\] induce
isomorphisms on $\H^2$ and  injections on $\H^3$,  hence 
isomorphisms 
on $\hh^1$ and injections on $\hh^2$, \ul{unless}
either\par (i) $X_2$ has a non-residual component; or\par
(ii) $X_2$ has a special component.
\end{thm}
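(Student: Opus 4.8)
The plan is to deduce everything from a single vanishing, $\H^0(\mathcal G_2^\bullet)=0$, together with the already-established exactness of the first graded piece. First I would replace the inclusion $\Omega^{+\bullet}_X\llog{D}\hookrightarrow\Omega^{+\bullet}_X\llogp{D}$ by its cokernel complex $\mathcal C^\bullet$ and exploit the filtration $\mathcal F_\bullet$ coming from \eqref{log-logplus}, whose associated graded pieces are the $\mathcal G_i^\bullet[-i]$. Because $\mathcal G_i^\bullet[-i]$ is supported in cohomological degrees $\ge i$, only $\mathcal G_1$ and $\mathcal G_2$ can feed the low-degree cohomology sheaves of $\mathcal C^\bullet$. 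The spectral sequence of the filtration then gives $\H^1(\mathcal C^\bullet)=\H^0(\mathcal G_1^\bullet)=0$ (since $\mathcal G_1^\bullet$ is exact), and, as the only other contribution in the next degree is $\H^1(\mathcal G_1^\bullet)=0$ as well, a canonical identification $\H^2(\mathcal C^\bullet)\cong\H^0(\mathcal G_2^\bullet)$. Thus the whole theorem reduces to proving $\H^0(\mathcal G_2^\bullet)=0$ under the stated hypotheses.

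To establish this vanishing I would argue component by component on $X_2$, covering each component by normally oriented open sets $W$. On a residual such $W$, the computation \eqref{gg} of the zeroth differential together with the special-component analysis around \eqref{special} shows that the stalk of $\H^0(\mathcal G_2^\bullet)$ vanishes at some point of $W$ unless $W$ is special---the subcase $W\cap D_2=\emptyset$ being vacuously special. Since $\mathcal G_2^0=N_2$ is coherent on $X_2$, vanishing of the stalk at one point propagates to $\H^0(\mathcal G_2^\bullet)|_W=0$, and hence to the entire connected component containing $W$. For a $W$, or a double-locus component $Z\subset D_2\cap W$, that is not normally orientable, the monodromy dichotomy applies: either the biresidue is sign-ambiguous and therefore forced to vanish, so the component is non-residual, or the monodromy cycles the three branches over $Z$, forcing $c_{12}=c_{23}=c_{31}$ and hence \eqref{special} with $k=2$, so the component is special. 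Consequently, once $X_2$ has neither a non-residual nor a special component, every component carries a normally oriented residual non-special $W$ at which the stalk vanishes, and therefore $\H^0(\mathcal G_2^\bullet)=0$ identically.

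With $\H^2(\mathcal C^\bullet)=\H^0(\mathcal G_2^\bullet)=0$ in hand, the long exact sequence of cohomology sheaves attached to $0\to\Omega^{+\bullet}_X\llog{D}\to\Omega^{+\bullet}_X\llogp{D}\to\mathcal C^\bullet\to0$ yields at once the isomorphism on $\H^2$ (using in addition $\H^1(\mathcal C^\bullet)=0$) and the injection on $\H^3$; the isomorphism on $\H^1$ holds unconditionally, since $\mathcal C^\bullet$ starts in degree $1$ and $\H^1(\mathcal C^\bullet)=0$. Feeding these sheaf-level comparisons into the hypercohomology spectral sequence $E_2^{p,q}=H^p(X,\H^q)\Rightarrow\hh^{p+q}$, which is functorial for the map of complexes, produces the asserted isomorphism on $\hh^1$ and injection on $\hh^2$ in the indexing of the statement. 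Transporting back along interior multiplication by $\Phi$ to the Lichnerowicz--Poisson side $T^\bullet_X\mlog{D}\to T^\bullet_X$, the $\hh^1$-isomorphism identifies the two deformation spaces, while the $\hh^2$-injection transfers unobstructedness from the locally trivial deformations---unobstructed by the Hodge-theoretic argument recalled above---to $(X,\Phi)$, giving strong unobstructedness.

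The step I expect to be the main obstacle is the globalization in the second paragraph: upgrading pointwise vanishing of the stalk of the coherent sheaf $\H^0(\mathcal G_2^\bullet)$ to vanishing on each entire component of $X_2$, while simultaneously disposing of the non-normally-orientable components through the monodromy dichotomy. The delicate bookkeeping is to check that the two apparent escape routes---$W\cap D_2=\emptyset$ and the cyclic-monodromy case---are correctly absorbed into the single label \emph{special} (the former vacuously, the latter via $k=2$), so that the only genuine exceptions remaining are the two listed in the statement.
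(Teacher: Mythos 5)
Your skeleton follows the paper's (filtration, exactness of $\mathcal G_1^\bullet$, the residual/special analysis of the zeroth differential, then long exact sequences), but the step you yourself flag as the main obstacle is precisely where the argument breaks, and it breaks irreparably in the form you state it. The propagation ``stalk of $\H^0(\mathcal G_2^\bullet)$ vanishes at one point $\Rightarrow\H^0(\mathcal G_2^\bullet)|_W=0\Rightarrow\H^0(\mathcal G_2^\bullet)=0$ identically'' is false, because $\H^0(\mathcal G_2^\bullet)$ is \emph{not} coherent: the zeroth differential \eqref{g} obeys a Leibniz rule, so it is $\C$-linear but not $\O_{X_2}$-linear, and its kernel is only a sheaf of $\C$-vector spaces; coherence of the single term $\mathcal G_2^0=N_2$ gives no control over it. Worse, the statement you reduce the whole theorem to is actually false in every relevant case: on a residual $W$, at any point of $W\setminus D_2$ the form $\psi_2$ in \eqref{gg} is closed and holomorphic, so $dg=g\psi_2$ has the nowhere-vanishing local solution $g=e^{h}$ with $dh=\psi_2$ (in the Darboux model $\Phi=c\,\dlog(x_1)\wedge\dlog(x_2)+dx_3\wedge dx_4$ one even has $\psi_2=0$, and the constants solve it). Hence the stalk of $\H^0(\mathcal G_2^\bullet)$ equals $\C\neq 0$ at every point of $W\setminus D_2$, and this sheaf never vanishes on a residual component, no matter what happens along the triple locus. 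Geometrically this stalk is generated by the class of $\phi_1\wedge\phi_2$, i.e.\ the node-smoothing bivector $\del_{x_1}\wedge\del_{x_2}$, which is a nontrivial \emph{local} deformation class at every double point; for the same reason the map of cohomology sheaves $\H^2(\Omega^{+\bullet}_X\llog{D})\to\H^2(\Omega^{+\bullet}_X\llogp{D})$ is never an isomorphism of sheaves near a residual double point, so the theorem's $\H^2$, $\H^3$ cannot be read as cohomology sheaves at all (and your sheaf-level endgame cannot be carried out).

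What a single vanishing stalk does kill is \emph{global sections}: a section of the kernel over a connected open set or component containing such a point vanishes near it, hence identically, by unique continuation for the equation $dg=g\psi_2$. This yields $H^0\bigl(X_2,\H^0(\mathcal E_2^\bullet)\bigr)=\hh^0(X_2,\mathcal E_2^\bullet)=0$ when (i) and (ii) both fail --- this is how the paper's ``$H^0(\mathcal G_2^\bullet)|_W=0$'' must be read --- and the comparison must then be run in hypercohomology rather than in cohomology sheaves: by the filtration spectral sequence, $\hh^1(X,\mathcal C^\bullet)$ and $\hh^2(X,\mathcal C^\bullet)$ are built from subquotients of $\hh^0(X_1,\mathcal E_1^\bullet)$, resp.\ of $\hh^1(X_1,\mathcal E_1^\bullet)\oplus\hh^0(X_2,\mathcal E_2^\bullet)$, all of which vanish ($\mathcal E_1^\bullet$ being null-homotopic), and the long exact sequence of hypercohomology then gives the asserted isomorphism and injection, with $\H^2,\H^3$ interpreted as $\hh$ in form-degree indexing. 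With that repair your first and third paragraphs survive essentially verbatim (replace cohomology sheaves by hypercohomology throughout and drop the local-to-global spectral sequence, which is no longer needed), and your ``vacuously special'' reading of the case $W\cap D_2=\emptyset$ is in fact the correct bookkeeping --- indeed better than the paper's remark that such components are automatically non-residual, which is not true in general (e.g.\ $\P^1\times\P^1\times A$ with $\Phi=c\,\dlog(z_1)\wedge\dlog(z_2)+\sigma_A$). But as written, your proposal derives the theorem from a false intermediate statement, so the gap is genuine.
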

As noted above, any component of $X_2$ that is disjoint from $D_2$, i.e. contains no
triple points of $D$, is automatically non-residual.
\begin{cor}
	Notations as above, if $X$ is compact and K\"ahlerian and
	 conditions (i), (ii) both fail, then the pair $(X, \Phi)$
	has unobstructed deformations and the polar divisor of $\Phi$ deforms locally trivially.
	\end{cor}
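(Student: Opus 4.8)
The plan is to read off the corollary from Theorem~\ref{iso} by a standard deformation-theoretic smoothness-transfer argument, with the compact K\"ahler hypothesis entering only through the unobstructedness of locally trivial deformations. Recall that the Lichnerowicz--Poisson complex $(T^\bullet_X,[\,.\,,\Pi])$ controls deformations of the Poisson pair $(X,\Pi)$: its hypercohomology $\hh^1(T^\bullet_X)$ is the tangent space to deformations of $(X,\Pi)$ and the obstructions lie in $\hh^2(T^\bullet_X)$, while $(T_X^\bullet\mlog{D},[\,.\,,\Pi])$ governs the locally trivial deformations, i.e. those under which $D=[\Pi^n]$ deforms locally trivially. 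Since conditions (i) and (ii) both fail, Theorem~\ref{iso} tells us that the inclusion $T_X^\bullet\mlog{D}\to T^\bullet_X$ induces an isomorphism on $\hh^1$ and an injection on $\hh^2$.

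First I would observe that this inclusion is a morphism of differential graded Lie algebras: the differential $[\,.\,,\Pi]$ and the Schouten bracket are inherited on the sub-DGLA of log-tangential polyvectors (which contains $\Pi$), so the inclusion induces a morphism of deformation functors $\Def_{T_X^\bullet\mlog{D}}\to\Def_{T^\bullet_X}$. Next I would assemble two inputs. On one hand, for $X$ compact K\"ahler the locally trivial deformation functor is \emph{smooth}: as recalled above and established in \cite{logplus}, the isomorphism $T_X^\bullet\mlog{D}\simeq\Omega_X^{+\bullet}\llog{D}$ endows the controlling DGLA with a Hodge-theoretic structure that forces its formality (equivalently, the degeneration of the relevant Hodge spectral sequence) and hence unobstructedness of $\Def_{T_X^\bullet\mlog{D}}$. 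On the other hand, I would invoke the standard smoothness criterion for morphisms of DGLAs (Goldman--Millson, Manetti): a morphism that is surjective on $H^1$ and injective on $H^2$ induces a smooth morphism of deformation functors, which is moreover an \emph{isomorphism} when the induced map on $H^1$ is bijective.

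Combining these, the morphism $\Def_{T_X^\bullet\mlog{D}}\to\Def_{T^\bullet_X}$ is an isomorphism of functors and transports smoothness: since the source is smooth, so is the target, i.e. $(X,\Phi)$ has unobstructed deformations. The isomorphism of functors then says that every deformation of $(X,\Phi)$ is, up to isomorphism, one of the locally trivial deformations carried by the source, so the polar divisor $D$ deforms locally trivially, as claimed. The main obstacle I anticipate is the deformation-theoretic passage itself: one must confirm that the inclusion is genuinely a DGLA (or $L_\infty$) morphism inducing the stated maps on the correct hypercohomology groups, and keep careful track of the degree shift between the sheaf-level statement of Theorem~\ref{iso} ($\H^2$ an isomorphism, $\H^3$ injective) and the hypercohomology statement ($\hh^1$ an isomorphism, $\hh^2$ injective) that actually feeds the smoothness criterion; the Hodge-theoretic smoothness of the source, though quoted from \cite{logplus}, is the substantive analytic ingredient underpinning the whole argument.
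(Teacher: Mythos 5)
Your proposal is correct and follows essentially the same route as the paper: the paper deduces the corollary from Theorem \ref{iso} by exactly this transfer argument, citing \cite{logplus} for the Hodge-theoretic unobstructedness of locally trivial deformations and treating the DGLA smoothness/isomorphism criterion as ``well known,'' which you have simply spelled out (Goldman--Millson/Manetti). The only detail worth adding is that the criterion for an isomorphism of deformation functors also requires surjectivity on $\hh^0$, which holds here since the cokernel complex has vanishing $\hh^0$ as well as $\hh^1$.
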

In the case where $D$ has global normal crossings, i.e. is a union of smooth divisors, 
this result also follows from results in \cite{pym-loctorelli}, which also states a partial converse: when 
$T_X^\bullet\mlog{D}\to T^\bullet_X$ is not a quasi-isomorphism, $(X, \Phi)$ has obstructed deformations
and admits deformations  where $D$ either smooths or deforms locally trivially.
\begin{example}(Due to M. Matviichuk, B. Pym, T. Schedler, see \cite{pym-loctorelli}, communicated by \
	B. Pym)	\label{pym-example}
	Consider the matrix
	\eqspl{}{
		B=(b_{ij})=\left (
		\begin{matrix}
			0&1&2&4\\
			-1&0&3&5\\
			-2&-3&0&6\\
			-4&-5&1&0
		\end{matrix}	\right)	
	}
	and the corresponding log-symplectic form on $\C^4$, $\Phi=\sum \limits_{i<j}
	b_{ij}\frac{dz_i}{z_i}\wedge \frac{dz_j}{z_j}$ and corresponding Poisson
	structure $\Pi=\Phi\inv$, both of which extend to $\P^4$ with Pfaffian divisor
	$D=(z_0z_1z_2z_3z_4)$, $z_0=$ hyperplane at infinity. 
	Then $\Pi$ admits the 1st order Poisson deformation
	with bivector $z_3z_4\del_{z_1}\del_{z_2}$, which in fact extends to
	a Poisson deformation of $(\P^4, \Pi)$
	over the affine line $\C$, and the Pfaffian divisor deforms 
	as $(z_3z_4z_0(z_1z_2-tz_3z_4))$, hence non locally-trivially.
	Correspondingly, the log-plus form $z_3z_4\phi_1\phi_2$ is closed (
	and not exact). That  $d(z_3z_4\phi_1\phi_2)=0$ corresponds to the 
	integral column relation
	\[k_1-k_2+(e_1+e_2)-(e_3+e_4)=0\]	
	where the $k_i$ and $e_j$ are the columns of 
	the $B$ matrix and the identity, respectively, showing that $(z_1z_2z_3)$ and
	$(z_1z_2z_4)$ are residual triples of type II and (12), i.e. $(x_1)\cap (x_2)$ 
	is a special
	component of $X_2$.
\end{example}
\begin{rem}
	As we saw above, the presence of monodromy on the branches of $D$ is related to non-residual or
	special components. This suggests that log-symplectic manifolds with irreducible polar
	divisor may often be obstructed. However we don't have specific examples.
	\end{rem}

\vfill\eject
\bibliographystyle{amsplain}
\bibliography{../../../mybib}
\end{document}